\theoremstyle{definition}
\newtheorem{defn}{Definition}
\newtheorem{theorem}{Theorem}
\newtheorem{lemma}[defn]{Lemma}
\newtheorem{prop}[defn]{Proposition}
\newtheorem{rmk}[defn]{Remark}
\newtheorem*{qn*}{Question}
\numberwithin{defn}{section}
\newcommand{\gap}{\, \vspace{1em}}
\newcommand{\F}{\mathbb{F}}
\newcommand{\M}{\mathcal{M}}
\newcommand{\I}{\mathcal{I}}
\let\oldproofname=\proofname
\renewcommand{\proofname}{\rm\bf{\oldproofname}}
\renewcommand{\leq}{\leqslant}
\renewcommand{\geq}{\geqslant}
\renewcommand{\subset}{\subseteq}
\begin{document}

\pagenumbering{arabic}

\subjclass[2010]{Primary 11T55; Secondary 11N56}
\keywords{Highly Composite Numbers, Divisor Function, Arithmetic of Polynomials over Finite Fields}

\title{Highly Composite Polynomials and the maximum order of the divisor function in $\F_q[t]$}
\author[A. Afshar]{Ardavan Afshar}
\address{Department of Mathematics\\University College London\\
25 Gordon Street, London, England}
\email{ardavan.afshar.15@ucl.ac.uk}

\maketitle

\begin{abstract}
We investigate the analogues, in $\F_q[t]$, of highly composite numbers and the maximum order of the divisor function, as studied by Ramanujan. In particular, we determine a family of highly composite polynomials which is not too sparse, and we use it to compute the logarithm of the maximum of the divisor function at every degree up to an error of a constant, which is significantly smaller than in the case of the integers, even assuming the Riemann Hypothesis.
\end{abstract}

\gap

\section{Introduction} 

\gap
 
In \cite{Ramanujan}, Ramanujan investigated the divisor function $d(n)$, the number of divisors of $n$. Being interested in the maximum order of $d(n)$, he defined highly composite integers $n$ to be those for which $d(n) > d(n')$ for all $n > n'$, so that $D(N) := \max\{d(n) \ | \ n \leq N\}$ is given by $d(n')$ for the largest highly composite $n'\leq N$. He was able to compute $\log D(N)$ up to an error of at most $O(e^{-c\sqrt{\log \log N}}\log N)$ unconditionally and $O\left(\frac{\sqrt{\log N}}{(\log \log N)^3}\right)$ assuming the Riemann Hypothesis. Ramanujan studied carefully the prime factorisation of the highly composite integers, and his results were improved by Alaoglu and Erd\H{o}s in \cite{Alaoglu-Erdos}, who determined the exponent of each prime in the factorisation of a highly composite number up to an error of at most 1. \\

We consider the question of maximising the divisor function in the function field setting. Let $\mathbb{F}_q$ be a finite field, $\M = \{f \in \mathbb{F}_q[t] \text{ monic}\} $, $\M_n = \{f \in \M  :   \deg f = n \} $, $\I = \{f \in \M \text{ irreducible}\}$, $\I_n = \{f \in \I  : \deg f = n\}$, and $\pi(n) = |\I(n)| = \frac{1}{n} \sum_{d | n} \mu(d) q^{n/d}$ where $\mu(d)$ is the M\"{o}bius function. For $f \in \M$, let $\tau(f)$ be the number of monic divisors of $f$, and observe that a generic polynomial $f$ in $\M$ is of the form 
\begin{equation} \label{eqn:polynomial}
f= \prod_{p \in \I} p^{a_p} \quad \text{ with } \quad \deg f = \sum_{p \in \I} a_p \deg p \quad \text{ and } \quad \tau(f) =  \prod_{p \in \I} (1+a_p)
\end{equation}
where only finitely many $a_p$ are non-zero.  We wish to understand the polynomials which maximise the function $\tau$ up to a given degree, defined thus:

\begin{defn}
We call $f \in \M$ a \emph{highly composite polynomial} of degree $n$ if $\tau(f) = \max\{\tau(g) \ | \ g \in \bigcup_{m \leq n}\M_m\}$.
\end{defn}
\begin{rmk}
Highly composite polynomials of a given degree are not necessarily unique. For example, all linear polynomials in $\M_1$ are highly composite polynomials of degree 1. 
\end{rmk}
\begin{rmk} \label{newHCP}
There is (at least) one new highly composite polynomial at each degree. Indeed, let $f$ be a highly composite polynomial of degree $n$ and suppose otherwise, so that $\deg f = m < n$. Then pick some $g \in \M_{n-m}$, so that $fg \in \M_n$ but $\tau(fg) = \#\{d\in\M:d|fg\} \geq \#(\{d\in\M:d|f\} \cup \{fg\}) > \tau(f)$, which is a contradiction.
\end{rmk}
\begin{rmk} \label{increasing}
If $f= \prod_{p \in \I} p^{a_p}$ is a highly composite polynomial, then $\deg p_i < \deg p_j$ implies $a_{p_i} \geq a_{p_j}$. Indeed, suppose otherwise and set $g = fp_i^{a_{p_j}-a_{p_i}}p_j^{a_{p_i}-a_{p_j}}$, so that $\tau(g) = \tau(f)$ but $\deg g = \deg f - (a_{p_j}-a_{p_i})(p_j - p_i) < \deg f$, which contradicts Remark \ref{newHCP}.
\end{rmk}
\begin{rmk}
See Appendix \ref{appendix} for an illustrative table of highly composite polynomials in $\F_2[t]$.
\end{rmk}

\gap

We first investigate a family of highly composite polynomials $\{h(x)\}_{x>0}$ (following Ramanujan in \cite{Ramanujan}), which we define as follows:

\begin{defn}
Let $x>0$. We say that $h=h(x) \in \M$ is an $x$-\emph{superior highly composite} polynomial, or just $x$-SHC, if for all $f \in \M$ we have
\begin{equation} \label{eqn:SHC}
\frac{\tau(h)}{q^{\deg h/x}} \begin{cases} \geq \frac{\tau(f)}{q^{\deg f/x}} &\text{ if } \deg h \geq \deg f \\ > \frac{\tau(f)}{q^{\deg f/x}} &\text{ if } \deg h < \deg f \end{cases}
\end{equation}
and we say that $h$ is an $x$-\emph{semi-superior highly composite} polynomial, or $x$-SSHC, if for all $f \in \M$ we have
\begin{equation} \label{eqn:SSHC}
\frac{\tau(h)}{q^{\deg h/x}} \geq \frac{\tau(f)}{q^{\deg f/x}}.
\end{equation}
A polynomial which is $x$-SHC or $x$-SSHC for some $x>0$ is called \emph{superior highly composite} or \emph{semi-superior highly composite} respectively.
\end{defn}
\begin{rmk}
Clearly, if $h \in \M$ is $x$-SHC, then it is $x$-SSHC. Moreover, any polynomial $h$ which is $x$-SSHC is highly composite, since if $f \in \M$ with $\deg f \leq \deg h$, then by equation \eqref{eqn:SSHC} we have that
$$ \tau(f) \leq \frac{\tau(h)}{q^{(\deg h - \deg f)/x}} \leq \tau(h). $$
\end{rmk}

\gap

After defining a particular set for the parameter $x$ of an $x$-SSHC:

\begin{defn}
Let $$S = S_q := \left\{\frac{s \log q}{\log(1 + 1/r)} \ : \ s, r \geq 1 \right\}.$$
\end{defn}

\gap

we are able to determine the structure of the superior highly composite polynomials

\begin{theorem} \label{mainSHC}
 Let $x > 0$.
\begin{enumerate}
\item \label{SHCunique} There is one, and only one, $x$-SHC polynomial, namely 
\begin{equation} \label{eqn:unique}
\hat{h} = \hat{h}(x) = \prod_{k\geq1} \prod_{p \in \I_k} p^{a_k} \quad \text{where} \quad a_k = a_k(x) = \left\lfloor \frac{1}{q^{k/x} - 1} \right \rfloor
\end{equation}
Moreover, $\hat{h}$ is the unique highly composite polynomial of its degree.
\item \label{SHCconsec} If $x' < x''$ are two consecutive elements of $S$, then $\hat{h}(x) = \hat{h}(x')$ for all $x' \leq x < x''$. So, there is a one-to-one correspondence between $S$ and the set of superior highly composite polynomials, given by $ x \to \hat{h}(x) $.
\end{enumerate}
\end{theorem}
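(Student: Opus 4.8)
The plan is to take logarithms and exploit the fact that the quantity to be maximised is a \emph{separable} sum over the irreducibles. For $f = \prod_{p\in\I}p^{a_p}\in\M$ put $\Phi(f) := \tau(f)/q^{\deg f/x}$; by \eqref{eqn:polynomial},
\[
\log\Phi(f) \;=\; \sum_{k\geq 1}\sum_{p\in\I_k} g_k(a_p), \qquad
g_k(a) \;:=\; \log(1+a) - \frac{a k\log q}{x},
\]
where only finitely many $a_p$ are non-zero. Since the summands are mutually independent, an $f$ maximising $\Phi$ over $\M$ is obtained by choosing, independently for each irreducible $p$ of degree $k$, a value $a_p\in\Z_{\geq 0}$ maximising $g_k$; and the two clauses of \eqref{eqn:SHC} say precisely that $h$ is such a maximiser \emph{and} has the largest degree among all maximisers.

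So I first analyse $g_k$. Its forward difference $g_k(a)-g_k(a-1)=\log(1+\tfrac1a)-\tfrac{k\log q}{x}$ is strictly decreasing in $a$, so $g_k$ is strictly concave on $\Z_{\geq 0}$, and a one-line computation gives $g_k(a)\geq g_k(a-1)$ iff $a\leq\theta_k:=1/(q^{k/x}-1)$ (note $q^{k/x}>1$), with equality only if $a=\theta_k\in\Z$. Hence $g_k$ is non-decreasing up to $\lfloor\theta_k\rfloor$ and strictly decreasing afterwards, so it has the unique maximiser $\lfloor\theta_k\rfloor$ when $\theta_k\notin\Z$, and exactly the two maximisers $\theta_k-1$ and $\theta_k=\lfloor\theta_k\rfloor$ when $\theta_k\in\Z$; in every case $a_k:=\lfloor\theta_k\rfloor$ is the largest maximiser of $g_k$, and $a_k=0$ for $k$ large, so $\hat h$ is a genuine polynomial. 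Consequently $\hat h=\prod_k\prod_{p\in\I_k}p^{a_k}$ is a maximiser of $\Phi$, and for \emph{any} maximiser $f$ one has $a_p(f)\leq a_{\deg p}$ for every $p$; summing $a_p(\cdot)\deg p$ over $p$ gives $\deg f\leq\deg\hat h$, with equality forcing $a_p(f)=a_{\deg p}$ for all $p$, i.e.\ $f=\hat h$.

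Part \eqref{SHCunique} now follows quickly. Being a maximiser of $\Phi$, $\hat h$ satisfies \eqref{eqn:SSHC} and hence is highly composite; and it is $x$-SHC, because any $f$ with $\deg f>\deg\hat h$ fails to be a maximiser, so $\Phi(\hat h)>\Phi(f)$. Conversely, if $h$ is $x$-SHC it is a maximiser of $\Phi$; were $h\neq\hat h$ we would have $\deg h<\deg\hat h$ yet $\Phi(h)=\Phi(\hat h)$, contradicting the strict inequality in \eqref{eqn:SHC} taken with $f=\hat h$. Thus $\hat h$ is the unique $x$-SHC polynomial. For the last clause, let $g$ be highly composite of degree $n:=\deg\hat h$; then $\tau(g)=\tau(\hat h)$ and $\deg g\leq n$, and $\deg g<n$ would give $\Phi(g)=\tau(\hat h)/q^{\deg g/x}>\Phi(\hat h)$, impossible; hence $\deg g=n$, so $g$ is a maximiser of $\Phi$ of degree $\deg\hat h$, whence $g=\hat h$ by the previous paragraph.

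For part \eqref{SHCconsec}, observe that $\hat h(x)$ depends on $x$ only through the integers $a_k(x)=\lfloor\theta_k(x)\rfloor$, $k\geq 1$. Since $x\mapsto\theta_k(x)=1/(q^{k/x}-1)$ is continuous and strictly increasing from $0$ to $\infty$ on $(0,\infty)$, each $a_k(\cdot)$ is non-decreasing, takes at a jump the value immediately to its right, and is discontinuous exactly where $\theta_k(x)\in\Z_{\geq1}$, i.e.\ at $x=\frac{k\log q}{\log(1+1/r)}$ with $r\geq1$; the union of these jump-sets over $k\geq1$ is precisely $S$. Hence every $a_k(\cdot)$, and so $\hat h(\cdot)$, is constant on $[x',x'')$ whenever $x'<x''$ are consecutive in $S$, which is the first assertion. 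Finally $x'\mapsto\hat h(x')$ is injective on $S$, since passing from one element of $S$ to the next strictly increases some $a_k$ and $\I_k\neq\emptyset$, so the polynomial genuinely changes; and it is onto the superior highly composite polynomials by part \eqref{SHCunique}, as any such polynomial is $\hat h(x)$ for some $x>0$ and hence equals $\hat h(x')$ for the largest $x'\in S$ with $x'\leq x$. The one genuinely delicate point is the boundary case $\theta_k\in\Z$ — which is exactly the case $x\in S$ — where $\Phi$ has more than one maximiser: there one must check that imposing maximal degree (the strict clause of \eqref{eqn:SHC}) selects the exponent $a_k=\lfloor\theta_k\rfloor$ and so restores uniqueness of the $x$-SHC polynomial; everything else is bookkeeping.
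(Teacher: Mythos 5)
Your proof is correct and follows essentially the same route as the paper: you maximise $\log\bigl(\tau(f)/q^{\deg f/x}\bigr)$ prime-by-prime via the difference analysis of $\log(1+a)-\alpha a$ (the paper's Lemma \ref{phi} feeding Proposition \ref{SHCvariants}), identify $\hat{h}$ as the unique maximiser of largest degree to get part \eqref{SHCunique}, and locate the jump points of $a_k(\cdot)$ at the elements of $S$ to get part \eqref{SHCconsec}. The only organisational difference is that by noting $\lfloor\theta_k\rfloor$ is in every case the \emph{largest} maximiser of $g_k$, you avoid the paper's explicit case split on $x\in S$ versus $x\notin S$ and its appeal to Lemma \ref{unique} in part \eqref{SHCunique}.
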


and that of the semi-superior highly composite polynomials

\begin{theorem} \label{mainHC}
Let $x>0$.
\begin{enumerate}
\item If $x \not \in S$, then there is only one $x$-SSHC polynomial, namely the polynomial $\hat{h}(x)$ defined in equation \eqref{eqn:unique}.
\item \label{SSHCmany} If $x = \frac{s \log q}{\log(1 + 1/r)} \in S $, then there are $2^{\pi(s)}$ $x$-SSHC polynomials of the form
$$ h(x) = \frac{\hat{h}(x)}{P_{i_1} \cdots P_{i_v}} $$
where $\hat{h}(x)$ is as in equation \eqref{eqn:unique}, $0 \leq v \leq \pi(s)$, $P_{i_1}, \cdots, P_{i_v} \in \I_s$ distinct, and $\deg h (x) = \deg \hat{h}(x) - vs$. Moreover, the unique polynomials $h$ given by $v = \pi(s)$ and $v=0$ are two (distinct) consecutive superior highly composite polynomials.
\item If $h(x)$ is $x$-SSHC and $g \in \M_{\deg h(x)}$ is a highly composite polynomial, then $g$ is also $x$-SSHC.
\end{enumerate}
\end{theorem}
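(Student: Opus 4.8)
The plan is to linearise the extremal condition. A polynomial $h$ is $x$-SSHC exactly when it maximises $\tau(f)/q^{\deg f/x}$ over all $f\in\M$, equivalently when it maximises
\[
\Phi_x(f) := \log\tau(f) - \frac{\deg f}{x}\log q = \sum_{k\geq 1}\sum_{p\in\I_k}\Big(\log(1+a_p) - \frac{a_p k\log q}{x}\Big),
\]
using the factorisation \eqref{eqn:polynomial}. The decisive observation is that this sum separates completely over the primes, and the contribution of a prime $p$ depends only on its degree $k=\deg p$ through $g_k(a):=\log(1+a)-\tfrac{ak\log q}{x}$. Hence $f$ is $x$-SSHC if and only if each exponent $a_p$ maximises $g_{\deg p}$. (The maximum over $\M$ is attained because $g_k(a)\to-\infty$ in $a$ and $g_k$ is maximised at $a=0$ for all large $k$, so only finitely many primes carry a positive exponent.) The whole problem thus reduces to understanding the maximisers of the one-variable function $g_k$.

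First I would compute the discrete difference $g_k(a)-g_k(a-1)=\log\tfrac{a+1}{a}-\tfrac{k\log q}{x}$ and show it is positive, zero, or negative according as $a$ is less than, equal to, or greater than $\tfrac{1}{q^{k/x}-1}$. Thus $g_k$ is strictly unimodal with maximiser $a_k(x)=\big\lfloor\tfrac{1}{q^{k/x}-1}\big\rfloor$, and this maximiser is unique \emph{unless} $\tfrac{1}{q^{k/x}-1}$ is a positive integer $r$, in which case $g_k$ attains its maximum at exactly the two values $a=r-1$ and $a=r$. Call such a $k$ a tie degree; by construction a tie at degree $k$ occurs precisely when $x=\tfrac{k\log q}{\log(1+1/r)}$, that is, exactly when $x\in S$. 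Part~(1) follows immediately: if $x\notin S$ there are no tie degrees, every $g_k$ has the unique maximiser $a_k(x)$, and the only $x$-SSHC polynomial is $\hat h(x)$ of \eqref{eqn:unique}.

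For part~(2) the crux — and the step I expect to be the main obstacle — is to show that when $x=\tfrac{s\log q}{\log(1+1/r)}\in S$ the tie degree is \emph{unique}, equal to $s$. Writing $\beta=q^{1/x}$, a tie at degree $k$ means $\beta^k=1+1/r_k$ for a positive integer $r_k$; since $\beta^s=\tfrac{r+1}{r}$ we have $\beta^k=\big(\tfrac{r+1}{r}\big)^{k/s}$. If $s\nmid k$, put $s_1=s/\gcd(k,s)\geq 2$; then rationality of $\beta^k$ forces $\tfrac{r+1}{r}$ to be a perfect $s_1$-th power, i.e. $r$ and $r+1$ both $s_1$-th powers, which is impossible since $(u+1)^{s_1}-u^{s_1}\geq s_1\geq 2>1$ for $u\geq 1$. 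If instead $k=ms$ with $m\geq 2$, then $\beta^k=\big(\tfrac{r+1}{r}\big)^m=1+1/r_k$ forces $r_k\big((r+1)^m-r^m\big)=r^m$, so $(r+1)^m-r^m$ divides $r^m$; but $(r+1)^m-r^m\equiv 1\pmod r$ is coprime to $r^m$ and exceeds $1$, a contradiction. Hence $s$ is the only tie degree (and, incidentally, the representation of $x\in S$ is unique). Consequently an $x$-SSHC polynomial must use $a_k(x)$ at every degree $k\neq s$ and may use either $r$ or $r-1$ independently at each of the $\pi(s)$ primes of $\I_s$; choosing $r-1$ on a prime $P$ amounts to dividing by $P$. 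This yields exactly $2^{\pi(s)}$ polynomials $\hat h(x)/(P_{i_1}\cdots P_{i_v})$ with $\{P_{i_1},\dots,P_{i_v}\}\subseteq\I_s$ and $\deg h(x)=\deg\hat h(x)-vs$.

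To finish part~(2) I would match the two extreme members with consecutive superior highly composite polynomials. Since $\tfrac{1}{q^{s/x}-1}$ lies in $(r-1,r)$ for $x$ just below $x_0:=\tfrac{s\log q}{\log(1+1/r)}$ and in $(r,r+1)$ just above, the exponent $a_s$ jumps from $r-1$ to $r$ as $x$ crosses $x_0$, while every other $a_k(x)$ stays fixed because $s$ is the only tie degree; moreover $S$ is discrete (a bound $\tfrac{s\log q}{\log(1+1/r)}\leq M$ forces both $s$ and $r$ bounded), so $x_0$ is isolated in $S$. By Theorem~\ref{mainSHC}, $\hat h$ is constant on each interval between consecutive elements of $S$, so $\hat h(x_0)$ (the $v=0$ polynomial, with $a_s=r$) and the value of $\hat h$ on the interval immediately below $x_0$ (the $v=\pi(s)$ polynomial, with $a_s=r-1$) are the SHC polynomials attached to $x_0$ and to the preceding element of $S$: two distinct consecutive superior highly composite polynomials (distinct as $\pi(s)\geq 1$). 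Finally, part~(3) is immediate: if $h(x)$ is $x$-SSHC and $g\in\M_{\deg h(x)}$ is highly composite, then $\tau(g)\geq\tau(h(x))$ (highly composite of the same degree) while \eqref{eqn:SSHC} with $\deg g=\deg h(x)$ gives $\tau(h(x))\geq\tau(g)$; hence $\tau(g)=\tau(h(x))$, so $\tfrac{\tau(g)}{q^{\deg g/x}}=\tfrac{\tau(h(x))}{q^{\deg h(x)/x}}$ dominates $\tfrac{\tau(f)}{q^{\deg f/x}}$ for every $f\in\M$, i.e. $g$ is $x$-SSHC.
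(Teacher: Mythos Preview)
Your argument is correct and follows essentially the same architecture as the paper: separate $\log\tau(f)-\tfrac{\deg f}{x}\log q$ over primes (the paper's Proposition~\ref{SHCvariants}), analyse the one-variable maximisers of $g_k$ (the paper's Lemma~\ref{phi}), and prove that for $x\in S$ there is a single tie degree (the paper's Lemma~\ref{unique}). Two details differ. For the uniqueness of the tie degree, the paper argues that $(1+1/R)^s=(1+1/r)^S$ forces $R^s=r^S$ and $(R+1)^s=(r+1)^S$ by reducing the fractions, and then invokes $(1+y)^\alpha>1+y^\alpha$; your case split on $s\mid k$ versus $s\nmid k$, using coprimality of $(r+1)^m-r^m$ with $r$ and a perfect-power obstruction, is a different but equally clean route. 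For part~(3), the paper splits into $x\notin S$ (where $\hat h(x)$ is the unique highly composite polynomial of its degree) and $x\in S$ (where it identifies $g$ as a member of the set $E$); your direct observation that $\tau(g)\geq\tau(h)$ by high compositeness and $\tau(g)\leq\tau(h)$ by \eqref{eqn:SSHC} at equal degree is more economical and avoids the case distinction.
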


\gap

This family of semi-superior highly composite polynomials is not too sparse, so we can use it to construct polynomials at every degree which make the divisor function close to its maximum. In particular, if we let $T(N) := \max\{\tau(f) \ | \ f \in \M_N \}$, then we are able to compute $\log T(N)$ to within an error of at most $\log\frac{4}{3}$:

\begin{theorem} \label{mainMaxDiv}
Let $x  = \frac{s \log q}{\log(1 + 1/r)} \in S $, $\hat{h} = \hat{h}(x)$ and $a_k = a_k(x)$ be defined as in equation \eqref{eqn:unique}, and $h$ be an $x$-SSHC polynomial of degree $\deg \hat{h}(x) - vs$ with $0 \leq v < \pi(s)$. Then, if $N = \deg h - u$ with $0 \leq u \leq s-1$, we have
$$\log T(N) = \begin{cases} \log \tau(h) &\text{ if } u=0 \\ \log \tau(h) - \epsilon(N) &\text{ otherwise} \end{cases}$$
where $$\frac{u}{s}\log\left(1 + \frac{1}{r}\right) \leq \epsilon(N) \leq  \log\left(1+\frac{1}{a_u}\right) $$
Moreover, the size of this range for $\epsilon(N)$ is at most $\log\left(1+\frac{1}{a_u(a_u + 2)}\right) \leq \log\frac{4}{3}$.
\end{theorem}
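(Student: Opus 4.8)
The plan is to split according to whether $u=0$ or $1\le u\le s-1$, to establish the lower and upper bounds on $\epsilon(N):=\log\tau(h)-\log T(N)$ by two independent arguments, and finally to combine them to control the length of the interval. Throughout I would use the identity $q^{u/x}=(1+1/r)^{u/s}$, obtained by substituting $x=\tfrac{s\log q}{\log(1+1/r)}$; in particular $\tfrac{u}{s}\log(1+1/r)=\tfrac{u\log q}{x}$, which is what makes the stated lower bound a statement purely about $q^{u/x}$.

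For $u=0$ there is essentially nothing to prove: here $N=\deg h$, and since an $x$-SSHC polynomial is highly composite, Remark \ref{newHCP} (whose effect is that the maximum of $\tau$ over $\bigcup_{m\le N}\M_m$ is already attained in degree exactly $N$) gives $T(N)=\tau(h)$.

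Now assume $1\le u\le s-1$. For the lower bound $\epsilon(N)\ge\tfrac{u}{s}\log(1+1/r)$, which is the easy half, I would feed an arbitrary $f\in\M_N$ into the defining inequality \eqref{eqn:SSHC}, getting $\tau(f)\le\tau(h)\,q^{(\deg f-\deg h)/x}=\tau(h)\,q^{-u/x}$, and then take the maximum over $f\in\M_N$; note this already forces $\epsilon(N)>0$, which reconciles the two cases in the statement. For the upper bound $\epsilon(N)\le\log(1+1/a_u)$ I would exhibit an explicit competitor of degree $N$. Since $u\le s-1$ one has $q^{u/x}=(1+1/r)^{u/s}\le 1+1/r\le 2$, so $1/(q^{u/x}-1)\ge 1$ and hence $a_u\ge 1$; therefore $\hat h$ is divisible by some $p\in\I_u$ with exponent $a_u$, and because $u\ne s$ the passage from $\hat h$ to $h=\hat h/(P_{i_1}\cdots P_{i_v})$ — which lowers only the exponents of degree-$s$ primes — leaves this exponent unchanged. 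Then $f:=h/p\in\M_N$ satisfies $\tau(f)=\tfrac{a_u}{a_u+1}\tau(h)$, so $T(N)\ge\tfrac{a_u}{a_u+1}\tau(h)$ and the bound follows.

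It remains to bound the length of the interval, namely $\log(1+1/a_u)-\tfrac{u}{s}\log(1+1/r)=\log\big((1+1/a_u)\,q^{-u/x}\big)$. The key input is that $a_u=\lfloor 1/(q^{u/x}-1)\rfloor$ forces $q^{u/x}>\tfrac{a_u+2}{a_u+1}$; substituting this turns the quantity above into $\log\tfrac{(a_u+1)^2}{a_u(a_u+2)}=\log\big(1+\tfrac{1}{a_u(a_u+2)}\big)$, and $a_u\ge 1$ then yields $\le\log\tfrac43$. I do not anticipate a real obstacle anywhere; the only points that need care are the existence of an irreducible of degree $u$ dividing $h$ — which is exactly where the hypothesis $u\le s-1$ is used, via $a_u\ge 1$ and $u\ne s$ — and keeping straight that deleting the degree-$s$ primes $P_{i_j}$ from $\hat h$ does not disturb the exponent of a degree-$u$ prime.
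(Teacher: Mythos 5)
Your proposal is correct and follows essentially the same route as the paper's proof: the same competitor $h/P$ with $P\in\I_u$ for the upper bound on $\epsilon(N)$, the same application of the $x$-SSHC inequality \eqref{eqn:SSHC} for the lower bound, and the same floor-function estimate $q^{u/x}\geq 1+\frac{1}{a_u+1}$ to control the length of the interval. The only (immaterial) difference is that you deduce $a_u\geq 1$ directly from $u<s$ and the formula $a_u=\lfloor 1/(q^{u/x}-1)\rfloor$, whereas the paper invokes Remark \ref{increasing} to get $a_u\geq a_s=r\geq 1$; your explicit remark that dividing out degree-$s$ primes does not alter the exponent of a degree-$u$ prime is a point the paper leaves implicit.
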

\begin{rmk}
From the final sentence of part \ref{SSHCmany} of Theorem \ref{mainHC}, we know that the (distinct) superior highly composite polynomial $\hat{h}(x')$ immediately preceding $\hat{h}(x)$ has degree $\deg \hat{h}(x') = \deg \hat{h}(x) - \pi(s)s$. So, the form of $N$ in Theorem \ref{mainMaxDiv} accounts for all integers between the degrees of these two consecutive superior highly composite polynomials. Therefore, for any $N \geq 1$, we can find $x>0$ so as to express $N$ in the form presented in Theorem \ref{mainMaxDiv}.
\end{rmk}

\gap

\section{Superior highly composite polynomials} \label{SHC-Section}

\gap

We begin by showing, contingent on some auxiliary lemmas proven subsequently, that

\begin{prop} \label{SHCvariants}
For each $x>0$, the function $\frac{\tau(f)}{q^{\deg f /x}}$ is maximised over all $f \in \M$ by (at least one) $h = h(x) \in \M$. Moreover, if we write $h = \prod_{p \in \I} p^{a_p} = \prod_{p \in \I} p^{a_p(x)}$, we have that
\begin{enumerate}
\item \label{SHCprop} If $x \not\in S$, then $a_p(x) = \left\lfloor \frac{1}{q^{\deg p/x} - 1} \right \rfloor$ for each $p \in \I$ and so $h$ is unique.
\item \label{SSHCprop} Else, if $x = \frac{s \log q}{\log(1 + 1/r)} \in S$, so that $r = \frac{1}{q^{s/x} - 1}$, then
$$ a_p(x) = \begin{cases} \left\lfloor \frac{1}{q^{\deg p/x} - 1} \right \rfloor &\text{ if } \deg p \neq s \\ r \text{ or } r-1 &\text{ if } \deg p = s\end{cases} $$ 
and so there are $2^{\pi(s)}$ such polynomials $h$.
\end{enumerate}
\end{prop}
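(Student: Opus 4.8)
The plan is to reduce the maximisation of $\tau(f)/q^{\deg f/x}$ over $\M$ to a coordinatewise optimisation, exactly as in equation \eqref{eqn:polynomial}. Writing $f = \prod_{p\in\I} p^{a_p}$, we have
$$ \frac{\tau(f)}{q^{\deg f/x}} = \prod_{p\in\I} \frac{1+a_p}{q^{a_p \deg p/x}}, $$
so the problem factors over the irreducibles: for each $p$ we must choose a nonnegative integer $a_p$ maximising $g_k(a) := (1+a)/q^{a k/x}$ where $k = \deg p$. The first step is therefore to study, for fixed real $k/x > 0$, the one-variable function $a \mapsto (1+a)q^{-ak/x}$ on $\Z_{\geq 0}$: compare consecutive values to see that it increases while $(2+a)/(1+a) > q^{k/x}$, i.e. while $a < 1/(q^{k/x}-1)$, and decreases thereafter. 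Hence the maximiser is $a = \lfloor 1/(q^{k/x}-1)\rfloor$, and it is unique unless $1/(q^{k/x}-1)$ is a positive integer $r$, in which case $g_k(r-1) = g_k(r)$ and both $r-1$ and $r$ are maximisers. This last case is precisely $q^{k/x} = 1 + 1/r$, i.e. $k/x = \log(1+1/r)/\log q$, i.e. $x = k\log q/\log(1+1/r)$; with $k = s$ this says $x \in S$ with the degree-$s$ irreducibles being the ambiguous coordinates. (One should note here that $q^{k/x}-1 > 0$ always and that for $k$ large enough $\lfloor 1/(q^{k/x}-1)\rfloor = 0$, so only finitely many $a_p$ are nonzero and the product is a genuine polynomial; this also guarantees the supremum over $\M$ is attained.)

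Having the optimal exponents coordinatewise, the second step is to assemble them: since the objective is a product of independent factors and each factor is individually maximised by the stated choice of $a_p$ (depending only on $\deg p$), any $h$ attaining the global maximum must have each $a_p$ equal to an optimal value for its degree, and conversely any such choice attains the maximum. When $x\notin S$ the optimal exponent is unique at every degree, giving a unique $h$ and part \ref{SHCprop}. When $x = s\log q/\log(1+1/r) \in S$, the only degree at which there is a choice is $\deg p = s$ — one must check that for $\deg p = k \neq s$ the quantity $1/(q^{k/x}-1)$ is not an integer, which follows because $q^{k/x} = (1+1/r)^{k/s}$ and a short argument (using that $(1+1/r)^{m}$ is an integer only for $m$ a nonnegative integer, since $1+1/r = (r+1)/r$ in lowest terms) shows this is never of the form $1 + 1/r'$ for $k/s \notin \Z$, and for $k/s$ a positive integer $\geq 2$ one checks $(1+1/r)^{k/s} - 1 > 1/r$ has reciprocal non-integral; in any case this is the content of the relation $r = 1/(q^{s/x}-1)$ stated in the proposition. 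Then each of the $\pi(s)$ irreducibles of degree $s$ independently takes exponent $r$ or $r-1$, giving exactly $2^{\pi(s)}$ maximisers and part \ref{SSHCprop}.

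The main obstacle I expect is the bookkeeping in the second step for $x \in S$: verifying cleanly that $s$ is the \emph{only} degree producing an ambiguity, i.e. that $1/(q^{k/x}-1) \notin \Z_{\geq 1}$ for every $k \neq s$. This is a statement about when $(1+1/r)^{k/s}$ can again have the shape $1 + 1/(\text{integer})$, and it needs the irrationality/non-integrality input mentioned above rather than just the elementary calculus of the one-variable function. Everything else — the monotonicity analysis of $(1+a)q^{-ak/x}$, the finiteness of the support, and the factorisation of the objective — is routine. Once those pieces are in place, Theorem \ref{mainSHC}\ref{SHCunique} and Theorem \ref{mainHC} follow by matching the two cases of this proposition against the definitions of $x$-SHC and $x$-SSHC.
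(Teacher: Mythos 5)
Your proposal is correct and follows essentially the same route as the paper: factor the objective over the irreducibles, optimise each exponent separately by comparing consecutive values of $(1+a)q^{-ak/x}$ (the multiplicative form of the paper's Lemma \ref{phi}), and use a uniqueness-of-representation argument for elements of $S$ (the paper's Lemma \ref{unique}, proved by exactly the irreducible-fraction comparison you sketch) to see that degree $s$ is the only ambiguous coordinate. One small slip: $(2+a)/(1+a) > q^{k/x}$ is equivalent to $a+1 < 1/(q^{k/x}-1)$, not to $a < 1/(q^{k/x}-1)$, so taken literally your monotonicity claim would place the maximum at $\left\lfloor 1/(q^{k/x}-1)\right\rfloor + 1$; the maximiser you actually state, $\left\lfloor 1/(q^{k/x}-1)\right\rfloor$, is nonetheless the correct one.
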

\begin{proof}
From \eqref{eqn:polynomial}, we can write
$$ \frac{\tau(f)}{q^{\deg f /x}} = \prod_{p \in \I} \frac{1+a_p}{q^{a_p \deg p / x}} = \exp\left(\sum_{p \in \I} \log(1+a_p) - \frac{a_ p \deg p \log q}{x}\right)$$
so that to maximise $\frac{\tau(f)}{q^{\deg f /x}}$, for each $p \in \I$ we must maximise the quantity $ \phi_{a_p} := \log(1+a_p) - \alpha a_p $ with $ \alpha = \frac{\deg p \log q}{x}$.  \\

If $x \not\in S$, then $\alpha$ cannot be written as $\log(1+\frac{1}{j})$ for any integer $j$, so by Lemma \ref{phi} we have that $\phi_{a_p}$ is maximised if and only if $a_p = \left\lfloor \frac{1}{e^\alpha - 1} \right \rfloor = \left\lfloor \frac{1}{q^{\deg p/x} - 1} \right \rfloor $. \\

Otherwise, if $x\in S$, then by Lemma \ref{unique} there is a unique pair $(s, r)$ such that $x = \frac{s \log q}{\log(1 + 1/r)}$. Therefore, if $ \deg p \neq s $, then $\alpha$ cannot be written as $\log(1+\frac{1}{j})$ for any integer $j$, so by Lemma \ref{phi} we have that $\phi_{a_p}$ is maximised if and only if $a_p = \left\lfloor \frac{1}{e^\alpha - 1} \right \rfloor = \left\lfloor \frac{1}{q^{\deg p/x} - 1} \right \rfloor $. Else, if $ \deg p = s $, then $ \alpha = \log(1+\frac{1}{r}) $ and so by Lemma \ref{phi} we have that $\phi_{a_p}$ is maximised if and only if $a_p = r$ or $r-1$.
\end{proof}
\begin{rmk}
Notice in both cases that $a_p(x)$ is zero for $\deg p > \frac{x \log 2}{\log q}$, so that the factorisation of $h$ is in fact a finite product.
\end{rmk}

\gap

This leads us first to the proof of Theorem \ref{mainSHC}:

\begin{proof}[Proof of Theorem \ref{mainSHC}] For $x>0$, let $\hat{h} = \hat{h}(x) = \prod_{k\geq1} \prod_{p \in \I_k} p^{a_k}$ with $a_k(x) = \left\lfloor \frac{1}{q^{k/x} - 1} \right \rfloor$, and for $x = \frac{s \log q}{\log(1 + 1/r)} \in S$, let $E = E(x)$ be the set of polynomials defined in part \ref{SSHCprop} of Proposition \ref{SHCvariants}.
\begin{enumerate}
\item If $x \not \in S$, then from part \ref{SHCprop} of Proposition \ref{SHCvariants}, we know that for all $f \in \M$, we have
$$ \frac{\tau(\hat{h})}{q^{\deg \hat{h}/x}} > \frac{\tau(f)}{q^{\deg f/x}} $$
and so $\hat{h}$ is the unique $x$-SHC. Moreover, if $\deg f \leq \deg \hat{h}$ then
$$ \tau(\hat{h}) > \tau(f)q^{(\deg \hat{h} - \deg f)/x} \geq \tau(f) $$
and so $\hat{h}$ is the unique highly composite polynomial of its degree. \\
Otherwise, if $x = \frac{s \log q}{\log(1 + 1/r)} \in S$, so that $r = \frac{1}{q^{s/x} - 1}$, we observe that $\hat{h} = \prod_{p \in \I} p^{\hat{a}_p}$ with 
$$ \hat{a}_p = \hat{a}_p(x) = \left\lfloor  \frac{1}{q^{\deg p/x} - 1} \right \rfloor = \begin{cases} \left\lfloor \frac{1}{q^{\deg p/x} - 1} \right \rfloor &\text{ if } \deg p \neq s \\ r  &\text{ if } \deg p = s\end{cases} $$ 
so that $\hat{h}(x) \in E$. Therefore, by part \ref{SSHCprop} of Proposition \ref{SHCvariants}, we have that 
$$ \frac{\tau(\hat{h})}{q^{\deg \hat{h}/x}} \begin{cases} = \frac{\tau(f)}{q^{\deg f/x}} &\text{ if } f \in E \\ > \frac{\tau(f)}{q^{\deg f/x}} &\text{ if } f \not \in E \end{cases}.$$
and that for all $f \in E\setminus\{\hat{h}\}$, we have $\deg f \leq \deg \hat{h} - s  < \deg \hat{h}$. Therefore $\hat{h}$ is the unique $x$-SHC, and moreover, if $\deg f \leq \deg \hat{h}$ then 
$$ \tau(\hat{h}) \begin{cases} = \tau(f)q^{(\deg \hat{h} - \deg f)/x} > \tau(f) &\text{ if } f \in E \\ > \tau(f)q^{(\deg \hat{h} - \deg f)/x} \geq \tau(f) &\text{ if } f \not \in E \end{cases}  $$
and so $\hat{h}$ is the unique highly composite polynomial of its degree.
\item Let $x' < x''$ be two consecutive elements of $S$, and let $\tilde{x} = \min\{x > x' : \hat{h}(x) \neq \hat{h}(x') \}$. Then there is some $\tilde{k}$ such that $a_{\tilde{k}}(\tilde{x}) = m > a_{\tilde{k}}(x')$. Therefore we must have that
$$ x' < \frac{\tilde{k} \log q}{\log\left(1 + \frac{1}{m}\right)} \leq \tilde{x} < \frac{\tilde{k} \log q}{\log\left(1 + \frac{1}{1+m}\right)} $$
and by the minimality of $\tilde{x}$ we must have that $$ \tilde{x} = \frac{\tilde{k} \log q}{\log\left(1 + \frac{1}{m}\right)} \in S.$$ So, by the minimality of $\tilde{x}$  and the definition of $x''$, we conclude that $\tilde{x} = x''$, and therefore that $\hat{h}(x) = \hat{h}(x')$ for all $x' \leq x < x''$. It follows that there is a one-to-one correspondence between $S$ and the set of superior highly composite polynomials, given by $ x \to \hat{h}(x) $.
\end{enumerate}
\end{proof}

and then to the proof of Theorem \ref{mainHC}:

\begin{proof}[Proof of Theorem \ref{mainHC}] For $x>0$, let $\hat{h} = \hat{h}(x) = \prod_{k\geq1} \prod_{p \in \I_k} p^{a_k}$ with $a_k(x) = \left\lfloor \frac{1}{q^{k/x} - 1} \right \rfloor$, and for $x = \frac{s \log q}{\log(1 + 1/r)} \in S$, let $E = E(x)$ be the set of polynomials defined in part \ref{SSHCprop} of Proposition \ref{SHCvariants}.
\begin{enumerate}
\item \label{SHCinHC} If $x \not \in S$, then the result follows from part \ref{SHCprop} of Proposition \ref{SHCvariants}.
\item If $x = \frac{s \log q}{\log(1 + 1/r)} \in S$, then from part \ref{SSHCprop} of Proposition \ref{SHCvariants}, we have that the $x$-SSHC polynomials are precisely the $2^{\pi(s)}$ polynomials in the set $E$, which we can rewrite as
$$ E = \left\{h(x) = \frac{\hat{h}(x)}{P_{i_1} \cdots P_{i_v}} : \ 0 \leq v \leq \pi(s) \text{ and }  P_{i_1}, \cdots, P_{i_v} \in \I_s \text{ distinct}\right\}.$$
When $v = 0$, $h(x) = \hat{h}(x)$ is superior highly composite. When $v = \pi(s)$, $h(x) = \prod_{k\geq1} \prod_{p \in \I_k} p^{\tilde{a}_k}$ where
$$ \tilde{a}_k = \tilde{a}_k(x) = \begin{cases}  a_k(x) &\text{ if } k \neq s \\  r - 1  &\text{ if } k = s\end{cases}. $$ 
Now, let $y = \max\{x' \in S: x' < x\}$ so that, by the definition of $x$ and $y$, we have
$$ \frac{s \log q}{\log\left(1 + \frac{1}{r-1}\right)}  \leq y < x =  \frac{s \log q}{\log\left(1 + \frac{1}{r}\right)}   $$
and so $a_k(y) = r-1$. When $k \neq s$, by Lemma \ref{unique}, we cannot have that $x = \frac{k \log q}{\log(1 + 1/a_k(x))} \in S$, and so
$$ \frac{k \log q}{\log\left(1 + \frac{1}{a_k(x)}\right)} < x <  \frac{k \log q}{\log\left(1 + \frac{1}{1 + a_k(x)}\right)}.$$
This means that, by the definition of $y$, we have for $k \neq s$ that 
$$ \frac{k \log q}{\log\left(1 + \frac{1}{a_k(x)}\right)}  \leq y < x < \frac{k \log q}{\log\left(1 + \frac{1}{1 + a_k(x)}\right)} $$
and so $a_k(y) = a_k(x)$. Therefore, we have that $\hat{h}(y) = h(x)$ and so, by part \ref{SHCconsec} of Theorem \ref{mainSHC}, $h(x)$ is the (distinct) superior highly composite  polynomial immediately preceding $\hat{h}(x)$.
\item Let $h(x)$ be $x$-SSHC and $g \in \M_{\deg h(x)}$ be a highly composite polynomial. If $x \not \in S$, then by part \ref{SHCinHC} of Theorem \ref{mainHC}, $h(x)=\hat{h}(x)$, and by part \ref{SHCunique} of Theorem \ref{mainSHC}, $\hat{h}(x)$ is the unique highly composite polynomial of its degree, so $g = \hat{h}(x) = h(x)$. Else, if $x \in S$, then by part \ref{SSHCprop} of Proposition \ref{SHCvariants}, we have that $h \in E$ and
$$ \frac{\tau(h)}{q^{\deg h/x}} \begin{cases} = \frac{\tau(f)}{q^{\deg f/x}} &\text{ if } f \in E \\ > \frac{\tau(f)}{q^{\deg f/x}} &\text{ if } f \not \in E \end{cases}.$$
Therefore $\tau(g) = \tau(h)$ if, and only if, $g \in E$ which means that $g$ is also $x$-SSHC.
\end{enumerate}
\end{proof}

\gap

Finally we conclude by proving the auxiliary lemmas used in the proof of Proposition \ref{SHCvariants}, namely

\begin{lemma} \label{phi}
Let $ \alpha > 0$ and consider the sequence $(\phi_n)_{n \geq 0} $ defined by $\phi_n = \log(1+n) - \alpha n$. We have that 
\begin{enumerate}
\item If $\log(1+\frac{1}{j+1}) < \alpha < \log(1+\frac{1}{j})$ for some integer $j$, then $\phi_{n}$ is maximised if and only if $n = j = \left\lfloor \frac{1}{e^\alpha - 1} \right\rfloor$.
\item Else, if $ \alpha = \log(1+\frac{1}{j})$ for some integer $j$, then $\phi_{n}$ is maximised if and only if $n = j$ or $j-1$.
\end{enumerate}
\end{lemma}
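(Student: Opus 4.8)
The plan is to study the first difference $\Delta_n := \phi_{n+1} - \phi_n$. A one-line computation gives
$$ \Delta_n = \log(2+n) - \log(1+n) - \alpha = \log\left(1 + \frac{1}{n+1}\right) - \alpha, $$
so the sign of $\Delta_n$ is determined by comparing $\alpha$ with $\log(1 + \frac{1}{n+1})$. The key structural observation is that $n \mapsto \log(1 + \frac{1}{n+1})$ is strictly decreasing in $n$ and tends to $0$; hence $\Delta_n$ is positive for small $n$ and negative (or first zero, then negative) for large $n$, with exactly one sign change. Consequently $(\phi_n)_{n\geq 0}$ is strictly increasing up to some index and strictly decreasing thereafter, i.e.\ unimodal, and the whole problem reduces to locating the index where $\Delta_n$ changes sign in each of the two cases.

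First I would treat the case $\log(1 + \frac{1}{j+1}) < \alpha < \log(1 + \frac{1}{j})$ for an integer $j \geq 1$ (the degenerate value $j = 0$, i.e.\ $\alpha > \log 2$, is handled the same way with the convention $\log(1 + \frac{1}{0}) = +\infty$, yielding the maximum at $n = 0$). Here $\Delta_{j-1} = \log(1 + \frac{1}{j}) - \alpha > 0$ and $\Delta_{j} = \log(1 + \frac{1}{j+1}) - \alpha < 0$, and by the strict monotonicity above $\Delta_n > 0$ for all $n < j-1$ and $\Delta_n < 0$ for all $n > j$; so $\phi_n$ is strictly maximised at $n = j$, and no other $n$ attains the maximum. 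Then I would note that the hypothesis is equivalent to $\frac{1}{j+1} < e^\alpha - 1 < \frac{1}{j}$, which rearranges to $j < \frac{1}{e^\alpha - 1} < j + 1$, so that $j = \left\lfloor \frac{1}{e^\alpha - 1} \right\rfloor$, as claimed.

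Second, in the case $\alpha = \log(1 + \frac{1}{j})$ for an integer $j \geq 1$, I have $\Delta_{j-1} = 0$, hence $\phi_{j-1} = \phi_{j}$, while $\Delta_n > 0$ for $n \leq j-2$ and $\Delta_n < 0$ for $n \geq j$, once again by strict monotonicity of $\log(1 + \frac{1}{n+1})$. Therefore $\phi_n$ is maximised precisely at $n = j-1$ and $n = j$. Since the intervals $(\log(1+\frac{1}{j+1}),\log(1+\frac{1}{j}))$ for $j\geq 1$, the ray $(\log 2,\infty)$, and the points $\log(1+\frac{1}{j})$ partition $(0,\infty)$, every $\alpha > 0$ falls into exactly one case, and the lemma follows. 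I do not expect any genuine obstacle: the only thing needing care is the bookkeeping at the boundary index $n = j-1$ (and the degenerate case $j=0$), ensuring that the "if and only if" is read off correctly from the strict inequalities rather than merely from weak ones.
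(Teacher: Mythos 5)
Your proposal is correct and follows essentially the same route as the paper: both analyse the sign of the first difference of $\phi_n$, which equals $\log(1+\tfrac{1}{n+1})-\alpha$ in your forward-difference convention (the paper uses the backward difference $\phi_n-\phi_{n-1}=\log(1+\tfrac1n)-\alpha$, an immaterial index shift), and read off unimodality and the location of the maximum from the single sign change. Your write-up is somewhat more explicit than the paper's about the boundary index, the floor identity, and the degenerate case $j=0$, but the underlying argument is the same.
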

\begin{proof}
Let $\Delta_n = \phi_n - \phi_{n-1} = \log(1 + \frac{1}{n}) - \alpha$ for $n \geq 1$. Then we have that
\begin{enumerate}
\item If $\log(1+\frac{1}{j+1}) < \alpha < \log(1+\frac{1}{j})$, then $\Delta_n > 0$ when $n \leq j$, and $\Delta_n < 0$ when $n > j$.
\item Else, if $ \alpha = \log(1+\frac{1}{j})$ then $\Delta_n > 0$ when $n < j$, $\Delta_j = 0$ and $\Delta_n < 0$ when $n > j$.
\end{enumerate}
\end{proof}

and

\begin{lemma} \label{unique}
Let $x \in S$. Then there is a unique pair $(s, r)$ such that $x = \frac{s \log q}{\log(1 + 1/r)}$.
\end{lemma}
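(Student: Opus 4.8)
The plan is to reduce the statement to an elementary Diophantine equation and then exploit the coprimality of consecutive integers. Suppose $(s_1,r_1)$ and $(s_2,r_2)$ are pairs of positive integers giving the same value of $x$. Since $q\geq 2$, cancelling $\log q$, cross-multiplying and exponentiating turns the equation $\frac{s_1\log q}{\log(1+1/r_1)}=\frac{s_2\log q}{\log(1+1/r_2)}$ into $(1+1/r_2)^{s_1}=(1+1/r_1)^{s_2}$, which after clearing denominators reads
$$ (r_1+1)^{s_2}\,r_2^{s_1} = (r_2+1)^{s_1}\,r_1^{s_2}. $$
Now $\gcd(r_i,r_i+1)=1$ for $i=1,2$, so both sides of $\frac{(r_1+1)^{s_2}}{r_1^{s_2}}=\frac{(r_2+1)^{s_1}}{r_2^{s_1}}$ are fractions in lowest terms; by uniqueness of the reduced form of a positive rational, this forces the two separate identities $r_1^{s_2}=r_2^{s_1}$ and $(r_1+1)^{s_2}=(r_2+1)^{s_1}$.

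Next I would remove the common factor from the exponents: set $d=\gcd(s_1,s_2)$ and write $s_1=de_1$, $s_2=de_2$ with $\gcd(e_1,e_2)=1$. Taking positive $d$-th roots gives $r_1^{e_2}=r_2^{e_1}$ and $(r_1+1)^{e_2}=(r_2+1)^{e_1}$. Applying the standard fact that $u^m=v^n$ with $\gcd(m,n)=1$ forces $u=w^n$ and $v=w^m$ for some positive integer $w$ (compare $p$-adic valuations on the two sides), I obtain positive integers $a,b$ with $r_1=a^{e_1}$, $r_2=a^{e_2}$, $r_1+1=b^{e_1}$, $r_2+1=b^{e_2}$. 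Subtracting yields $b^{e_1}-a^{e_1}=1$ and $b^{e_2}-a^{e_2}=1$; since $r_1\geq 1$ we have $a\geq 1$, hence $b\geq a+1\geq 2$.

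To finish, I would observe that $b^{e}-a^{e}=1$ with $b\geq 2$ can only hold for $e=1$: factoring $b^{e}-a^{e}=(b-a)(b^{e-1}+b^{e-2}a+\cdots+a^{e-1})$, the second factor is at least $b^{e-1}\geq 2$ as soon as $e\geq 2$, contradicting the value $1$. Hence $e_1=e_2=1$, so $s_1=s_2=d$ and $r_1=a=r_2$, which is the desired uniqueness. I do not expect a genuine obstacle here; the only step requiring a little care is the passage from the single equation to the two separated ones via coprimality, after which the argument is routine. (A variant would instead note that $t\mapsto \log(t+1)/\log t$ is strictly decreasing on $(1,\infty)$, hence injective, but that route needs a short calculus computation and separate attention to the case $r_i=1$, whereas the arithmetic argument above is uniform.)
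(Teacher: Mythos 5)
Your proof is correct, and its first half is exactly the paper's argument: both reduce the hypothesis to $\frac{(r_1+1)^{s_2}}{r_1^{s_2}}=\frac{(r_2+1)^{s_1}}{r_2^{s_1}}$ and use $\gcd(r_i,r_i+1)=1$ to split this single equation into the two integer identities $r_1^{s_2}=r_2^{s_1}$ and $(r_1+1)^{s_2}=(r_2+1)^{s_1}$, which is the one genuinely non-obvious step. You diverge only in how you finish. The paper assumes without loss of generality that $s_2>s_1$, takes roots to write $r_2=r_1^{s_2/s_1}$ and $1+r_2=(1+r_1)^{s_2/s_1}$, and contradicts the real-variable inequality $(1+x)^{\alpha}>1+x^{\alpha}$ for $x>0$, $\alpha>1$. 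You instead stay entirely in the integers: you factor out $d=\gcd(s_1,s_2)$, apply the coprime-exponent lemma ($u^{m}=v^{n}$ with $\gcd(m,n)=1$ forces a common base, proved by comparing $p$-adic valuations, and degenerating harmlessly when $r_1=r_2=1$), and then eliminate the exponents via $b^{e}-a^{e}=(b-a)(b^{e-1}+\cdots+a^{e-1})\geq 2$ for $e\geq 2$ and $b\geq 2$. Your route is a little longer but purely arithmetic, needs no case split or WLOG on which $s_i$ is larger, and avoids rational exponents and the calculus fact; the paper's is shorter at the cost of that analytic input. Both are complete proofs.
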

\begin{proof}
Suppose otherwise, so that $x = \frac{s \log q}{\log(1 + 1/r)} =  \frac{S \log q}{\log(1 + 1/R)}$ with $(r, s)$ and $(R, S)$ distinct. If $s = S$ then $r=R$, so it must be that $s \neq S$ and in particular we may assume without loss of generality that $S>s$ so that $\frac{S}{s} > 1$. \\
Now $\frac{s \log q}{\log(1 + 1/r)} =  \frac{S \log q}{\log(1 + 1/R)}$ implies $\left(1+\frac{1}{R}\right)^s = \left(1+\frac{1}{r}\right)^S$ and therefore $\frac{(R+1)^s}{R^s}  = \frac{(r+1)^S}{r^S}$.
However, $\frac{(R+1)^s}{R^s} $ and $ \frac{(r+1)^S}{r^s}$ are irreducible fractions, so we must have that $R^s = r^S $ and $(R+1)^s = (r+1)^S$. So, $(1+r)^{S/s} = 1 + R = 1 + r^{S/s}$, but $(1+x)^{\alpha} > 1 + x^\alpha$ for $x > 0$ and $\alpha > 1$, which is a contradiction.
\end{proof}

\gap

\section{The maximum value of the divisor function at each degree} \label{maxDiv-Section}

\gap

Since the family of semi-superior highly composite polynomials is not too sparse, we can use it to construct polynomials at every degree which make the divisor function close to its maximum, and thus prove Theorem \ref{mainMaxDiv}:

\begin{proof}[Proof of Theorem \ref{mainMaxDiv}]
If $u=0$, then $N = \deg h$, and by Theorem \ref{mainHC}, $h$ is highly composite, so $T(N) = \tau(h)$. Otherwise, if $1 \leq u \leq s-1$, we have by Remark \ref{increasing} that $a_u(x) \geq a_s(x) = r \geq 1$. So, if we pick $P \in \I_u$ and let $g = \frac{h}{P}$, then $g \in \M$ with $\deg g = \deg h - u = N$ and therefore 
$$ T(N) \geq \tau(g) = \tau(h)\frac{a_u}{1+a_u} = \frac{\tau(h)}{1 + \frac{1}{a_u}} .$$
On the other hand, by Theorem \ref{mainHC}, $h$ is $x$-SSHC, and so for any $f \in \M_N$, we have
$$ \tau(f) \leq \tau(h) q^{(\deg f - \deg h)/x} = \tau(h) q^{-u/x}  $$
and therefore
$$ T(N) \leq   \tau(h) q^{-u/x}   =  \tau(h)  \left(1 + \frac{1}{r}\right)^\frac{-u}{s} .$$
Overall, this gives us that
\begin{equation} \label{eqn:epsilon}
\log q^{u/x} = \frac{u}{s}\log\left(1 + \frac{1}{r}\right) \leq \log \tau(h) - \log T(N) \leq  \log\left(1+\frac{1}{a_u}\right).
\end{equation}
Now, since
$$ a_u = \left\lfloor \frac{1}{q^{u/x} - 1} \right\rfloor \geq \frac{1}{q^{u/x} - 1} - 1$$ 
we have that
$$ q^{u/x} \geq 1 + \frac{1}{1 + a_u} $$
and so the size of the range in equation $\eqref{eqn:epsilon}$ is at most
$$ \log\left(1+\frac{1}{a_u}\right) - \log\left(1+\frac{1}{1 + a_u}\right) = \log\left(1+\frac{1}{a_u(a_u + 2)}\right) \leq \log\frac{4}{3} .$$
\end{proof}

\gap

\appendix \section{Table of highly composite polynomials in $\F_2[t]$} \label{appendix}

\gap

We conclude with a table of highly composite polynomials in $\F_2[t]$, in which SSHC polynomials are additionally marked with $\ast$ and SHC polynomials are additionally marked with $\ast \ast$. We denote the monic irreducible polynomials in $\F_2[t]$ by $P_1(t), P_2(t), \cdots$ in ascending order of the value which they take on $t=2$ (so that, if $\deg P_i > \deg P_j$, then $i > j$), and we write $f \in \M$ in the form $f=P_1^{a_1} P_2^{a_2} \cdots $ in order to shorten the printing. We have listed the explicit values of $P_1(t), \cdots, P_{14}(t)$, which are all of the irreducible polynomials which appear in the factorisations of polynomials in our table of highly composite polynomials, in their own table below, along with the values which they take on $t=2$.

\begin{center}
	\begin{longtable}{|c|c|c|c|}
	\caption{Table of ordered monic irreducible polynomials in $\F_2[t]$} \label{tableIrred} \\
	\hline
	$ i $ & $ P_i(t)  \in \I $ & $\deg P_i $ & $P_i(2)$	\\ \hline
	1 & $t$ & 1 & 2 \\ \hline
	2 & $t + 1$ & 1 & 3 \\ \hline
	3 & $t^2 + t + 1$ & 2 & 7 \\ \hline
	4 & $t^3 + t + 1$ & 3 & 11 \\ \hline
	5 & $t^3 + t^2 + 1$ & 3 & 13 \\ \hline
	6 & $t^4 + t + 1$ & 4 & 19 \\ \hline
	7 & $t^4 + t^3 + 1$ & 4 & 25 \\ \hline
	8 &  $t^4 + t^3 + t^2 + t + 1$ & 4 &31 \\ \hline
	9 &  $t^5 + t^2 + 1$ & 5 & 37 \\ \hline
	10 & $t^5 + t^3 + 1$ & 5 & 41 \\ \hline
	11 & $t^5 + t^3 + t^2 + t + 1$ & 5 & 47 \\ \hline
	12 & $t^5 + t^4 + t^2 + t + 1$ & 5 & 55 \\ \hline
	13 & $t^5 + t^4 + t^3 + t + 1$ & 5 & 59 \\ \hline
 	14 & $t^5 + t^4 + t^3 + t^2 + 1$ & 5 & 61 \\ \hline
	\end{longtable}
\end{center}

The algorithm which we use to compute highly composite polynomials is an adaption of the algorithm used to compute highly composite numbers in \cite{Kedlaya}. Though we take $q=2$, it works in the same way for any $\F_q[t]$, and we give a brief description as follows. \\

We first define the set $\M^{(k)} \subset \M$ of polynomials whose prime factors are in $\{P_1, \cdots, P_k\}$, and we call $f \in \M^{(k)}$ a \emph{$k$-highly composite polynomial} if $\tau(f) = \max\{\tau(g) \ | \ g \in \M^{(k)} \text{ and } \deg g \leq \deg f\}$. 
Then we let $\text{HC}(k, n) \subset \M^{(k)}$ be the set of $k$-highly composite polynomials of degree exactly $n$, and make the following observations:

\begin{itemize}
\item If $f = P_1^{a_1} P_2^{a_2} \cdots P_{k-1}^{a_{k-1}} P_k^{a_k} \in \text{HC}(k, n)$, then $g = P_1^{a_1} P_2^{a_2} \cdots P_{k-1}^{a_{k-1}} \in \text{HC}(k-1, n - a_k \deg P_k)$. Otherwise, if we had some $h \in \text{HC}(k-1, n - a_k \deg P_k)$ with $\tau(h) > \tau(g)$ then we would have $\tau(h P_k^{a_k})  > \tau(f)$ even though $h P_k^{a_k} \in \M^{(k)}$ with $\deg h P_k^{a_k} = n$, which would be a contradiction.
\item If $f \in \text{HC}(k, n)$, and $P_i^{a_i}P_j^{a_j}$ divides $f$  with $\deg P_j > \deg P_i$, then $a_i \geq a_j$. The proof of this is identical to that presented in Remark \ref{increasing}.
\end{itemize}

\gap

The first observation allows us to iteratively compute $\text{HC}(k, n)$, as long as we know $\text{HC}(k-1, m)$ for all $m \leq n$. In particular, for each $j \geq 0$ with $n-j \deg P_k \geq 0$, we pick any (one) $g_j \in \text{HC}(k-1, n - j \deg P_k)$, and determine which values of $j$ maximise $\tau(g_j P_k^j)$. Once we have determined such a set $J = \{j_1, \cdots, j_r\}$ we can conclude that
$$\text{HC}(k, n) = \{f_j P_k^j : j \in J, \ f_j \in \text{HC}(k-1, n - j \deg P_k)\}.$$
It is trivial to observe that the base case $\text{HC}(1, n) = \{P_1^n\}$ for all $n \geq 0$, and we proceed inductively from there. \\

The second observation allows to note that, if $\{P_1, \cdots, P_k\} = \cup_{a \leq b} \I_a$ for some $b \geq 1$, and $\deg P_1 \cdots P_k \geq n$, then the set $\text{HC}(k, n)$ is in fact the set of highly composite polynomials of degree $n$. Thus, once we have $\text{HC}(k, n)$, by taking $k$ sufficiently large, we are able to compute the highly composite polynomials of degree $n$.

\gap

\begin{center}
    \begin{longtable}{| c | c | c | }
    \caption{Table of highly composite polynomials in $\F_2[t]$} \label{tableHCP} \\
    \hline
    $f \in \M$ & $\deg f$ & $\tau(f)$ \\ \hline
   	$ ^{*}P_{1}^{1} $ & 1 & 2 \\ \hline
   	$ ^{*}P_{2}^{1} $ & 1 & 2 \\ \hline
   	$ ^{**}P_{1}^{1} P_{2}^{1} $ & 2 & 4 \\ \hline
   	$ ^{*}P_{1}^{2} P_{2}^{1} $ & 3 & 6 \\ \hline
   	$ ^{*}P_{1}^{1} P_{2}^{2} $ & 3 & 6 \\ \hline
   	$ ^{**}P_{1}^{2} P_{2}^{2} $ & 4 & 9 \\ \hline
   	$ P_{1}^{3} P_{2}^{2} $ & 5 & 12 \\ \hline
   	$ P_{1}^{2} P_{2}^{3} $ & 5 & 12 \\ \hline
   	$ P_{1}^{2} P_{2}^{1} P_{3}^{1} $ & 5 & 12 \\ \hline
   	$ P_{1}^{1} P_{2}^{2} P_{3}^{1} $ & 5 & 12 \\ \hline
   	$ ^{**}P_{1}^{2} P_{2}^{2} P_{3}^{1} $ & 6 & 18 \\ \hline
   	$ ^{*}P_{1}^{3} P_{2}^{2} P_{3}^{1} $ & 7 & 24 \\ \hline
   	$ ^{*}P_{1}^{2} P_{2}^{3} P_{3}^{1} $ & 7 & 24 \\ \hline
   	$ ^{**}P_{1}^{3} P_{2}^{3} P_{3}^{1} $ & 8 & 32 \\ \hline
   	$ P_{1}^{4} P_{2}^{3} P_{3}^{1} $ & 9 & 40 \\ \hline
   	$ P_{1}^{3} P_{2}^{4} P_{3}^{1} $ & 9 & 40 \\ \hline
   	$ P_{1}^{4} P_{2}^{4} P_{3}^{1} $ & 10 & 50 \\ \hline
   	$ ^{*}P_{1}^{3} P_{2}^{3} P_{3}^{1} P_{4}^{1} $ & 11 & 64 \\ \hline
   	$ ^{*}P_{1}^{3} P_{2}^{3} P_{3}^{1} P_{5}^{1} $ & 11 & 64 \\ \hline
   	$ P_{1}^{4} P_{2}^{3} P_{3}^{1} P_{4}^{1} $ & 12 & 80 \\ \hline
   	$ P_{1}^{3} P_{2}^{4} P_{3}^{1} P_{4}^{1} $ & 12 & 80 \\ \hline
   	$ P_{1}^{4} P_{2}^{3} P_{3}^{1} P_{5}^{1} $ & 12 & 80 \\ \hline
   	$ P_{1}^{3} P_{2}^{4} P_{3}^{1} P_{5}^{1} $ & 12 & 80 \\ \hline
   	$ P_{1}^{4} P_{2}^{4} P_{3}^{1} P_{4}^{1} $ & 13 & 100 \\ \hline
   	$ P_{1}^{4} P_{2}^{4} P_{3}^{1} P_{5}^{1} $ & 13 & 100 \\ \hline
   	$ ^{**}P_{1}^{3} P_{2}^{3} P_{3}^{1} P_{4}^{1} P_{5}^{1} $ & 14 & 128 \\ \hline
   	$ ^{*}P_{1}^{4} P_{2}^{3} P_{3}^{1} P_{4}^{1} P_{5}^{1} $ & 15 & 160 \\ \hline
   	$ ^{*}P_{1}^{3} P_{2}^{4} P_{3}^{1} P_{4}^{1} P_{5}^{1} $ & 15 & 160 \\ \hline
   	$ ^{**}P_{1}^{4} P_{2}^{4} P_{3}^{1} P_{4}^{1} P_{5}^{1} $ & 16 & 200 \\ \hline
   	$ P_{1}^{5} P_{2}^{4} P_{3}^{1} P_{4}^{1} P_{5}^{1} $ & 17 & 240 \\ \hline
   	$ P_{1}^{4} P_{2}^{5} P_{3}^{1} P_{4}^{1} P_{5}^{1} $ & 17 & 240 \\ \hline
   	$ P_{1}^{4} P_{2}^{3} P_{3}^{2} P_{4}^{1} P_{5}^{1} $ & 17 & 240 \\ \hline
   	$ P_{1}^{3} P_{2}^{4} P_{3}^{2} P_{4}^{1} P_{5}^{1} $ & 17 & 240 \\ \hline
   	$ ^{**}P_{1}^{4} P_{2}^{4} P_{3}^{2} P_{4}^{1} P_{5}^{1} $ & 18 & 300 \\ \hline
   	$ ^{*}P_{1}^{5} P_{2}^{4} P_{3}^{2} P_{4}^{1} P_{5}^{1} $ & 19 & 360 \\ \hline
   	$ ^{*}P_{1}^{4} P_{2}^{5} P_{3}^{2} P_{4}^{1} P_{5}^{1} $ & 19 & 360 \\ \hline
   	$ ^{**}P_{1}^{5} P_{2}^{5} P_{3}^{2} P_{4}^{1} P_{5}^{1} $ & 20 & 432 \\ \hline
   	$ P_{1}^{6} P_{2}^{5} P_{3}^{2} P_{4}^{1} P_{5}^{1} $ & 21 & 504 \\ \hline
   	$ P_{1}^{5} P_{2}^{6} P_{3}^{2} P_{4}^{1} P_{5}^{1} $ & 21 & 504 \\ \hline
   	$ P_{1}^{4} P_{2}^{4} P_{3}^{2} P_{4}^{1} P_{5}^{1} P_{6}^{1} $ & 22 & 600 \\ \hline
   	$ P_{1}^{4} P_{2}^{4} P_{3}^{2} P_{4}^{1} P_{5}^{1} P_{7}^{1} $ & 22 & 600 \\ \hline
   	$ P_{1}^{4} P_{2}^{4} P_{3}^{2} P_{4}^{1} P_{5}^{1} P_{8}^{1} $ & 22 & 600 \\ \hline
   	$ P_{1}^{5} P_{2}^{4} P_{3}^{2} P_{4}^{1} P_{5}^{1} P_{6}^{1} $ & 23 & 720 \\ \hline
   	$ P_{1}^{4} P_{2}^{5} P_{3}^{2} P_{4}^{1} P_{5}^{1} P_{6}^{1} $ & 23 & 720 \\ \hline
   	$ P_{1}^{5} P_{2}^{4} P_{3}^{2} P_{4}^{1} P_{5}^{1} P_{7}^{1} $ & 23 & 720 \\ \hline
   	$ P_{1}^{4} P_{2}^{5} P_{3}^{2} P_{4}^{1} P_{5}^{1} P_{7}^{1} $ & 23 & 720 \\ \hline
   	$ P_{1}^{5} P_{2}^{4} P_{3}^{2} P_{4}^{1} P_{5}^{1} P_{8}^{1} $ & 23 & 720 \\ \hline
   	$ P_{1}^{4} P_{2}^{5} P_{3}^{2} P_{4}^{1} P_{5}^{1} P_{8}^{1} $ & 23 & 720 \\ \hline
   	$ ^{*}P_{1}^{5} P_{2}^{5} P_{3}^{2} P_{4}^{1} P_{5}^{1} P_{6}^{1} $ & 24 & 864 \\ \hline
   	$ ^{*}P_{1}^{5} P_{2}^{5} P_{3}^{2} P_{4}^{1} P_{5}^{1} P_{7}^{1} $ & 24 & 864 \\ \hline
   	$ ^{*}P_{1}^{5} P_{2}^{5} P_{3}^{2} P_{4}^{1} P_{5}^{1} P_{8}^{1} $ & 24 & 864 \\ \hline
   	$ P_{1}^{6} P_{2}^{5} P_{3}^{2} P_{4}^{1} P_{5}^{1} P_{6}^{1} $ & 25 & 1008 \\ \hline
   	$ P_{1}^{5} P_{2}^{6} P_{3}^{2} P_{4}^{1} P_{5}^{1} P_{6}^{1} $ & 25 & 1008 \\ \hline
   	$ P_{1}^{6} P_{2}^{5} P_{3}^{2} P_{4}^{1} P_{5}^{1} P_{7}^{1} $ & 25 & 1008 \\ \hline
   	$ P_{1}^{5} P_{2}^{6} P_{3}^{2} P_{4}^{1} P_{5}^{1} P_{7}^{1} $ & 25 & 1008 \\ \hline
   	$ P_{1}^{6} P_{2}^{5} P_{3}^{2} P_{4}^{1} P_{5}^{1} P_{8}^{1} $ & 25 & 1008 \\ \hline
   	$ P_{1}^{5} P_{2}^{6} P_{3}^{2} P_{4}^{1} P_{5}^{1} P_{8}^{1} $ & 25 & 1008 \\ \hline
   	$ P_{1}^{4} P_{2}^{4} P_{3}^{2} P_{4}^{1} P_{5}^{1} P_{6}^{1} P_{7}^{1} $ & 26 & 1200 \\ \hline
   	$ P_{1}^{4} P_{2}^{4} P_{3}^{2} P_{4}^{1} P_{5}^{1} P_{6}^{1} P_{8}^{1} $ & 26 & 1200 \\ \hline
   	$ P_{1}^{4} P_{2}^{4} P_{3}^{2} P_{4}^{1} P_{5}^{1} P_{7}^{1} P_{8}^{1} $ & 26 & 1200 \\ \hline
   	$ P_{1}^{5} P_{2}^{4} P_{3}^{2} P_{4}^{1} P_{5}^{1} P_{6}^{1} P_{7}^{1} $ & 27 & 1440 \\ \hline
   	$ P_{1}^{4} P_{2}^{5} P_{3}^{2} P_{4}^{1} P_{5}^{1} P_{6}^{1} P_{7}^{1} $ & 27 & 1440 \\ \hline
   	$ P_{1}^{5} P_{2}^{4} P_{3}^{2} P_{4}^{1} P_{5}^{1} P_{6}^{1} P_{8}^{1} $ & 27 & 1440 \\ \hline
   	$ P_{1}^{4} P_{2}^{5} P_{3}^{2} P_{4}^{1} P_{5}^{1} P_{6}^{1} P_{8}^{1} $ & 27 & 1440 \\ \hline
   	$ P_{1}^{5} P_{2}^{4} P_{3}^{2} P_{4}^{1} P_{5}^{1} P_{7}^{1} P_{8}^{1} $ & 27 & 1440 \\ \hline
   	$ P_{1}^{4} P_{2}^{5} P_{3}^{2} P_{4}^{1} P_{5}^{1} P_{7}^{1} P_{8}^{1} $ & 27 & 1440 \\ \hline
   	$ ^{*}P_{1}^{5} P_{2}^{5} P_{3}^{2} P_{4}^{1} P_{5}^{1} P_{6}^{1} P_{7}^{1} $ & 28 & 1728 \\ \hline
   	$ ^{*}P_{1}^{5} P_{2}^{5} P_{3}^{2} P_{4}^{1} P_{5}^{1} P_{6}^{1} P_{8}^{1} $ & 28 & 1728 \\ \hline
   	$ ^{*}P_{1}^{5} P_{2}^{5} P_{3}^{2} P_{4}^{1} P_{5}^{1} P_{7}^{1} P_{8}^{1} $ & 28 & 1728 \\ \hline
   	$ P_{1}^{6} P_{2}^{5} P_{3}^{2} P_{4}^{1} P_{5}^{1} P_{6}^{1} P_{7}^{1} $ & 29 & 2016 \\ \hline
   	$ P_{1}^{5} P_{2}^{6} P_{3}^{2} P_{4}^{1} P_{5}^{1} P_{6}^{1} P_{7}^{1} $ & 29 & 2016 \\ \hline
   	$ P_{1}^{6} P_{2}^{5} P_{3}^{2} P_{4}^{1} P_{5}^{1} P_{6}^{1} P_{8}^{1} $ & 29 & 2016 \\ \hline
   	$ P_{1}^{5} P_{2}^{6} P_{3}^{2} P_{4}^{1} P_{5}^{1} P_{6}^{1} P_{8}^{1} $ & 29 & 2016 \\ \hline
   	$ P_{1}^{6} P_{2}^{5} P_{3}^{2} P_{4}^{1} P_{5}^{1} P_{7}^{1} P_{8}^{1} $ & 29 & 2016 \\ \hline
   	$ P_{1}^{5} P_{2}^{6} P_{3}^{2} P_{4}^{1} P_{5}^{1} P_{7}^{1} P_{8}^{1} $ & 29 & 2016 \\ \hline
   	$ P_{1}^{4} P_{2}^{4} P_{3}^{2} P_{4}^{1} P_{5}^{1} P_{6}^{1} P_{7}^{1} P_{8}^{1} $ & 30 & 2400 \\ \hline
   	$ P_{1}^{5} P_{2}^{4} P_{3}^{2} P_{4}^{1} P_{5}^{1} P_{6}^{1} P_{7}^{1} P_{8}^{1} $ & 31 & 2880 \\ \hline
   	$ P_{1}^{4} P_{2}^{5} P_{3}^{2} P_{4}^{1} P_{5}^{1} P_{6}^{1} P_{7}^{1} P_{8}^{1} $ & 31 & 2880 \\ \hline
   	$ ^{**}P_{1}^{5} P_{2}^{5} P_{3}^{2} P_{4}^{1} P_{5}^{1} P_{6}^{1} P_{7}^{1} P_{8}^{1} $ & 32 & 3456 \\ \hline
   	$ ^{*}P_{1}^{6} P_{2}^{5} P_{3}^{2} P_{4}^{1} P_{5}^{1} P_{6}^{1} P_{7}^{1} P_{8}^{1} $ & 33 & 4032 \\ \hline
   	$ ^{*}P_{1}^{5} P_{2}^{6} P_{3}^{2} P_{4}^{1} P_{5}^{1} P_{6}^{1} P_{7}^{1} P_{8}^{1} $ & 33 & 4032 \\ \hline
   	$ ^{**}P_{1}^{6} P_{2}^{6} P_{3}^{2} P_{4}^{1} P_{5}^{1} P_{6}^{1} P_{7}^{1} P_{8}^{1} $ & 34 & 4704 \\ \hline
   	$ P_{1}^{7} P_{2}^{6} P_{3}^{2} P_{4}^{1} P_{5}^{1} P_{6}^{1} P_{7}^{1} P_{8}^{1} $ & 35 & 5376 \\ \hline
   	$ P_{1}^{6} P_{2}^{7} P_{3}^{2} P_{4}^{1} P_{5}^{1} P_{6}^{1} P_{7}^{1} P_{8}^{1} $ & 35 & 5376 \\ \hline
   	$ P_{1}^{6} P_{2}^{5} P_{3}^{3} P_{4}^{1} P_{5}^{1} P_{6}^{1} P_{7}^{1} P_{8}^{1} $ & 35 & 5376 \\ \hline
   	$ P_{1}^{5} P_{2}^{6} P_{3}^{3} P_{4}^{1} P_{5}^{1} P_{6}^{1} P_{7}^{1} P_{8}^{1} $ & 35 & 5376 \\ \hline
   	$ ^{**}P_{1}^{6} P_{2}^{6} P_{3}^{3} P_{4}^{1} P_{5}^{1} P_{6}^{1} P_{7}^{1} P_{8}^{1} $ & 36 & 6272 \\ \hline
   	$ P_{1}^{7} P_{2}^{6} P_{3}^{3} P_{4}^{1} P_{5}^{1} P_{6}^{1} P_{7}^{1} P_{8}^{1} $ & 37 & 7168 \\ \hline
   	$ P_{1}^{6} P_{2}^{7} P_{3}^{3} P_{4}^{1} P_{5}^{1} P_{6}^{1} P_{7}^{1} P_{8}^{1} $ & 37 & 7168 \\ \hline
   	$ P_{1}^{7} P_{2}^{7} P_{3}^{3} P_{4}^{1} P_{5}^{1} P_{6}^{1} P_{7}^{1} P_{8}^{1} $ & 38 & 8192 \\ \hline
   	$ P_{1}^{6} P_{2}^{6} P_{3}^{3} P_{4}^{2} P_{5}^{1} P_{6}^{1} P_{7}^{1} P_{8}^{1} $ & 39 & 9408 \\ \hline
   	$ P_{1}^{6} P_{2}^{6} P_{3}^{3} P_{4}^{1} P_{5}^{2} P_{6}^{1} P_{7}^{1} P_{8}^{1} $ & 39 & 9408 \\ \hline
   	$ P_{1}^{6} P_{2}^{6} P_{3}^{2} P_{4}^{1} P_{5}^{1} P_{6}^{1} P_{7}^{1} P_{8}^{1} P_{9}^{1} $ & 39 & 9408 \\ \hline
   	$ P_{1}^{6} P_{2}^{6} P_{3}^{2} P_{4}^{1} P_{5}^{1} P_{6}^{1} P_{7}^{1} P_{8}^{1} P_{10}^{1} $ & 39 & 9408 \\ \hline
   	$ P_{1}^{6} P_{2}^{6} P_{3}^{2} P_{4}^{1} P_{5}^{1} P_{6}^{1} P_{7}^{1} P_{8}^{1} P_{11}^{1} $ & 39 & 9408 \\ \hline
   	$ P_{1}^{6} P_{2}^{6} P_{3}^{2} P_{4}^{1} P_{5}^{1} P_{6}^{1} P_{7}^{1} P_{8}^{1} P_{12}^{1} $ & 39 & 9408 \\ \hline
   	$ P_{1}^{6} P_{2}^{6} P_{3}^{2} P_{4}^{1} P_{5}^{1} P_{6}^{1} P_{7}^{1} P_{8}^{1} P_{13}^{1} $ & 39 & 9408 \\ \hline
   	$ P_{1}^{6} P_{2}^{6} P_{3}^{2} P_{4}^{1} P_{5}^{1} P_{6}^{1} P_{7}^{1} P_{8}^{1} P_{14}^{1} $ & 39 & 9408 \\ \hline
    \end{longtable}
\end{center}

\begin{rmk}
In $\F_2[t]$, there are certain degrees at which there is a unique highly composite polynomial of that degree, but where that polynomial is neither SHC nor SSHC (see degrees 10, 30 and 38 in the table of highly composite polynomials, for example). We leave for further investigation the question of whether there are infinitely many degrees with this property.
\end{rmk}

\gap

\section*{Acknowledgements}

The author would like to thank Andrew Granville for his encouragement and thoughtful advice, Sam Porritt for useful discussions, contributions and references, and the anonymous referee for suggestions which led to an extensive revision of the entire article. The research leading to these results has received funding from the European Research Council under the European Union's Seventh Framework Programme (FP7/2007-2013), ERC grant agreement n$^{\text{o}}$ 670239.

\gap


\begin{thebibliography}{HD}

\bibitem[A-E]{Alaoglu-Erdos} L.Alaoglu and P.Erd\H{o}s,
\emph{On Highly Composite and Similar Numbers},
Transactions of the American Mathematical Society 56 (1944), Issue 3, 448--469.

\bibitem[Ked]{Kedlaya} K.S.Kedlaya,
\emph{An Algorithm for Computing Highly Composite Numbers},
https://shreevatsa.github.io/site/assets/hcn/hcn-algorithm.pdf (retrieved 28th May 2020)

\bibitem[Ram]{Ramanujan} S. Ramanujan,
\emph{Highly Composite Numbers},
Proceedings of the London Mathematical Society 16 (1915), Issue 2, 347--409.

\bibitem[Ram2]{Ramanujan} S. Ramanujan,
\emph{Collected Papers of Srinivasa Ramanujan},
1st ed., Cambridge University Press, 1927

\end{thebibliography}
\end{document}